\documentclass{amsart}
\usepackage{amsmath,amssymb}
\usepackage[UKenglish] {babel}
\usepackage{tikz}
\usepackage{graphicx} 
\usepackage{anysize}
\usepackage{amssymb}
\usepackage{amsfonts}
\usepackage{mathrsfs}
\usepackage[all]{xy}
\usepackage{overpic}
\usepackage{fancyhdr}
\usepackage{hyperref}
\usepackage{ytableau}
\usepackage{multicol}
\usepackage{textcase}
\usepackage{setspace}
\usepackage{tcolorbox}
\usepackage{xy}
\xyoption{all}

\usepackage{amssymb,amsmath,amsthm,enumerate,graphicx}
\theoremstyle{definition}
\newtheorem{definition}{Definition}

\theoremstyle{plain}

\newtheorem{proposition}{Proposition}
\newtheorem{theorem}{Theorem}

\allowdisplaybreaks

\begin{document}

\title{Limiter Spaces: A Universal Extension for Limits of Real Sequences}
\author{Steven Lapp}
\address{S. Lapp, Department Of Mathematical and
Computational Sciences, University of Toronto Mississauga, 3359
Mississauga Road N., Mississauga, On L5L 1C6}
\email{steven.lapp@mail.utoronto.ca} 

\author{Marina  Tvalavadze}

\address{M. Tvalavadze, Department Of Mathematical and
Computational Sciences, University of Toronto Mississauga, 3359
Mississauga Road N., Mississauga, On L5L 1C6}
\email{marina.tvalavadze@utoronto.ca}

\begin{abstract}
We introduce the Limiter, a universal extension of the real numbers and of the limit functional that assigns a canonical limit in an enlarged space to every real sequence. Motivated by generalized summation methods such as Borel summation and Ramanujan’s assignments to divergent series, we require our extension to respect classical limits and assign limits in a way that depends only on the cluster points of a sequence and varies continuously when the cluster set is slightly modified.    
\end{abstract}
\maketitle
\textit{ Keywords:} divergent real sequences, limits, the extended real number line, a universal extension of real numbers. \\ \ \\
{\small \bf Mathematics Subject Classification 2010: 40A05, 54D35.}

 \section{Introduction}
Throughout the history of mathematics, many mathematicians have attempted to assign limits to real-valued divergent sequences and series using various techniques. Borel introduced Borel summation to assign values to divergent series \cite{Hardy}, and Ramanujan used the Euler–Maclaurin formula to obtain the famous assignment

\[
1 + \frac{1}{2} + \frac{1}{3} + \cdots = -\frac{1}{12}
\]
\cite{Cand}. These assignments, and Borel summation in particular, play a crucial role in quantum field theory \cite{Popov, Zinn}. However, this generalized notion of limit inherently disagrees with the usual topological definition of convergence. In this paper, we propose a universal extension of the real numbers and of the limit functional in which every sequence is assigned a canonical limit in an extended space. We refer to such an extension as the Limiter, and we define it in a manner analogous to the Stone–\v{C}ech\ compactification  and other categorical constructions \cite{Hindman, Ho, Hoehn}. We then formally define the Limiter and prove its existence.

\section{Cluster Points and Their Sets}

The real numbers $(\mathbb{R}, \leq)$ form a totally ordered set.We can extend this order by adjoining two new elements, $-\infty$ and $+\infty$,
such that $-\infty < x < +\infty$ for all $x \in \mathbb{R}$.
The resulting totally ordered set $\overline{\mathbb{R}} = \mathbb{R} \cup \{-\infty, +\infty\}$
is called the \emph{extended real number line} (or the \emph{extended real numbers}).\\ \ \\

\begin{center}
\begin{tikzpicture}[thick]

  \draw (-5,0) -- (5,0);

  \draw (0,0.2) -- (0,-0.2);
  \node at (0,-0.5) {$0$};

  \def\leftEnd{-5}
  \def\rightEnd{5}
  \pgfmathsetmacro{\leftClosest}{0}
  \pgfmathsetmacro{\rightClosest}{0}

  \def\iterations{100}

  \foreach \i in {1,...,\iterations} {
    
    \pgfmathsetmacro{\newLeft}{\leftEnd + 2*(\leftClosest - \leftEnd) / 3}
    \draw (\newLeft,0.15) -- (\newLeft,-0.15);

    \ifnum\i<4
      \pgfmathtruncatemacro{\labelLeft}{-\i}
      \ifnum\i=3
        \node[xshift=-0.14cm] at (\newLeft,-0.5) {$\labelLeft$}; 
      \else
        \node[xshift=-0.1cm] at (\newLeft,-0.5) {$\labelLeft$}; 
      \fi
    \fi
    \xdef\leftClosest{\newLeft}

    \pgfmathsetmacro{\newRight}{\rightEnd + 2*(\rightClosest - \rightEnd) / 3}
    \draw (\newRight,0.15) -- (\newRight,-0.15);

    \ifnum\i<4
      \pgfmathtruncatemacro{\labelRight}{\i}
      \node at (\newRight,-0.5) {$\labelRight$};
    \fi
    \xdef\rightClosest{\newRight}
  }

  \node at (-5,-0.5) {$-\infty$};
  \node at (5,-0.5) {$\infty$};

\end{tikzpicture}
\end{center}

It is standard to endow the generalized extended real numbers with the order topology induced by their natural order.
The resulting space has three notable properties:

\hspace{5mm} (1) The real numbers $\mathbb{R}$ with their standard topology form a subspace of $\overline{\mathbb R}.$

\hspace{5mm} (2) The order topology on any linearly ordered set is Hausdorff, so  $\overline{\mathbb{R}}$ is Hausdorff.

\hspace{5mm} (3) The extended real line  $\overline{\mathbb{R}}$  with the order topology is compact. 
\par\medskip
Since $\mathbb{R}$ with its usual topology is a subspace of
$\overline{\mathbb{R}} = [-\infty,\infty]$, every real cluster point of a
sequence $(x_n)$ in $\mathbb{R}$ is also a cluster point of $(x_n)$ viewed
as a sequence in $\overline{\mathbb{R}}$. We will denote by
$\Lambda(x_n),$ the set of all cluster points of $(x_n)$ in
$[-\infty,\infty].$


\begin{definition} Let $(x_n)$ be a sequence of real numbers. Then $L \in [-\infty,\infty]$ is called a
\emph{cluster point} of $(x_n)$ if there exists a subsequence $(x_{n_k})$ 
that converges to $L$. The set of all such cluster points of $(x_n)$ is denoted by $\Lambda (x_n):$
 $$\Lambda(x_n) =\{\hspace{1mm} L \in \overline{\mathbb R} \hspace{1mm} | \hspace{1mm} \text{ there exists a subsequence} \hspace{1mm} (x_{n_k}) \hspace{1mm} \text{converging to} \hspace{1mm} L \hspace{1mm} \}.$$ 
\end{definition}
\par\medskip
\noindent The compactness of $\overline{\mathbb{R}}$ asserts that every sequence of real numbers has a cluster point. That is, 
$\Lambda(x_n) \neq \emptyset$ for any sequence of real numbers $(x_n)$.

  

\begin{definition}
Let $(x_n)$ and $(y_n)$ be sequences of real numbers. 
We say that $(x_n)$ and $(y_n)$ are \emph{similar} if they have the same
set of cluster points, that is, if $\Lambda(x_n) = \Lambda(y_n)$.
In this case we write $(x_n) \sim (y_n)$.
\end{definition}

Since similarity is defined in terms of set equality, we can conclude that $\sim$ is an 
equivalence relation on the set of real sequences. The equivalence class of a sequence $(x_n)$ will be denoted by $[\hspace{0.5mm} (x_n) \hspace{0.5mm} ]$.

In our first theorem, we establish a connection between  similarity and the assignment of limits. 

\begin{theorem} Let $(x_n), (y_n)$ be two sequences in $\mathbb{R}$ and suppose their limits in $\overline{\mathbb{R}}$ exist.
Then, $\displaystyle \lim_{n \to \infty}x_n = \lim_{n \to \infty}y_n$ if and only if $(x_n) \sim (y_n)$. 

\end{theorem}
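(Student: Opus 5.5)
The argument rests on one auxiliary fact: if a real sequence $(x_n)$ has a limit $L \in \overline{\mathbb{R}}$, then $\Lambda(x_n) = \{L\}$. Once this is in hand, both implications are immediate. If $\lim x_n = \lim y_n = L$, the fact gives $\Lambda(x_n) = \{L\} = \Lambda(y_n)$, so $(x_n) \sim (y_n)$. Conversely, if $(x_n) \sim (y_n)$ with $\lim x_n = L$ and $\lim y_n = M$, then $\{L\} = \Lambda(x_n) = \Lambda(y_n) = \{M\}$, and hence $L = M$.

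To prove the fact, I will split it into two inclusions. For $L \in \Lambda(x_n)$, the whole sequence is a subsequence of itself and converges to $L$, so $L$ is a cluster point. For the reverse inclusion, take $L' \in \Lambda(x_n)$ and a subsequence $(x_{n_k})$ with $x_{n_k} \to L'$. Every subsequence of a convergent sequence converges to the same limit, so also $x_{n_k} \to L$. Since $\overline{\mathbb{R}}$ is Hausdorff (property (2)), limits of sequences in it are unique, and therefore $L' = L$.

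The only step needing real care is the claim that subsequences inherit the limit in $\overline{\mathbb{R}}$, including when $L = \pm\infty$. I will argue this directly from neighbourhoods. Any neighbourhood $U$ of $L$ in the order topology eventually contains $x_n$, say for all $n \ge N$. Since $n_k \ge k$, it then contains $x_{n_k}$ for all $k \ge N$. This argument treats the infinite limits uniformly, because their neighbourhoods are just the order-topology basic sets $(a, +\infty]$ and $[-\infty, a)$, so no separate case analysis is needed. Uniqueness of limits in a Hausdorff space is standard, so I do not expect any genuine obstacle.
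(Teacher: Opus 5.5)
Your proposal is correct and follows the same route as the paper: both reduce the theorem to the fact that a sequence converging in the Hausdorff space $\overline{\mathbb{R}}$ satisfies $\Lambda(x_n)=\{\lim_{n\to\infty} x_n\}$. The paper simply recalls this fact, while you prove it explicitly from the inheritance of limits by subsequences and the uniqueness of limits in Hausdorff spaces.
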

\begin{proof}
  The first direction follows from $[-\infty,\infty]$ being a Hausdorff space. Recall that a
convergent sequence in Hausdorff spaces has a unique cluster point. As such, 
the following sets are equal whenever  $\displaystyle \lim_{n \to \infty}x_n = \lim_{n \to \infty}y_n$
$$\Lambda(x_n) = \{\displaystyle \lim_{n \to \infty}x_n\} = \{\displaystyle \lim_{n \to \infty}y_n\} = \Lambda(y_n)$$
By the same reasoning, the converse holds whenever $(x_n) \sim (y_n)$. This proves the result.

\end{proof}

Theorem 1 tells us that, among sequences that converge in $\overline{\mathbb{R}}$, sequences with the same cluster points are assigned the same limit. Therefore, when assigning limits to divergent sequences in our extension, we should do so in a way that is consistent with this theorem. Beyond this property, there are additional conditions that we would like our extension to satisfy.
Recall that every real number is the limit of some sequence of rational numbers. When we introduce a new point 
$L$ into our extension of $\mathbb R$, we require that 
$L$ be the limit of some sequence of real numbers. In this way, the reals form a dense subset of the extension and the extended limits remain consistent with the original notion of limit. This is condition (2) in the definition below.

For condition 3, we want to ensure that the limit operation is stable in our extended 
topology. By stability of the limit, we mean two sequences with ``close" cluster point sets also have ``close" limits in the extension. Consequently, if the limit of a real number sequence $(x_n)$ belongs to
an open set $O$ of our extension, then we can find a smaller neighborhood $V\subseteq O$ that  
consists of only limits of sequences with cluster points sufficiently close to the cluster set of $(x_n)$. This will be 
formalized later. 

Finally, there may be multiple extensions satisfying the first three conditions. We therefore require our model of the axioms to be universal in the sense that, for any other representation of these axioms, there exists a unique mapping from our model into it. With this goal in mind, we now introduce some preliminary definitions.

\begin{definition} Let $X$, $Y$ be  topological spaces and suppose $\varphi: X \to Y$ is a continuous function. The image of $\varphi$, denoted by 
$Im(\varphi),$ is the set of all points $y \in Y$ achieved by applying $\varphi$ to all points in $X$. It is 
endowed with the subspace topology induced by $Y.$
   
\end{definition}

\begin{definition} Let $X$ and $Y$ be topological spaces. 
A map $\sigma: X\to Y$ is called an embedding if $\sigma$ is continuous and injective, and the induced map  $\sigma: X\to \sigma(X)$, where $\sigma(X)$ is equipped with the subspace topology from $Y$, is a homeomorphism.
\end{definition}
If an embedding of $X$ into $Y$ exists, then there is a subspace of 
$Y$ that is isomorphic to  $X$. For this reason, we call $Y$ an \emph{extension} of $X$.

Next, we introduce  notation for the set of 
all sequences of real numbers and their equivalence classes.

 The set of all real-number sequences $(x_n)$ is denoted by $Seq(\mathbb{R})$. 
The set of their equivalence classes under $\sim$ is denoted by $Seq(\mathbb{R})/\hspace{-0.5mm}\sim$.

With these preliminary definitions outlined, we now formalize our extension.

\begin{definition}

The \emph{Limiter} consists of a topological space $\mathcal X$, an embedding $i: \overline{\mathbb{R}} \to \mathcal X$
and a function $\displaystyle \underset{n\to \infty}{Lim}: Seq(\mathbb{R}) \to \mathcal X$ satisfying the following axioms for all pairs of real  
sequences $(x_n),(y_n) \in Seq(\mathbb{R})$, a point $\alpha \in \mathcal X$ and an open set $O$ in $\mathcal X$.
\begin{itemize}
    \item[(1)] The sequence $(i(x_n))$ converges to $\underset{n\to \infty}{Lim}(x_n)$ in $\mathcal X;$
    \item[(2)] If $(x_n)$ is convergent in $\overline{\mathbb{R}},$ then $ \displaystyle i( \hspace{1mm} \lim_{n\to\infty} x_n \hspace{1mm}) = \underset{n\to \infty}{Lim}(x_n);$
    \item[(3)] $\underset{n\to \infty}{Lim}(x_n) = \underset{n\to \infty}{Lim}(y_n)$ if and only if $(x_n) \sim (y_n);$
    \item[(4)] There exists $(z_n) \in Seq(\mathbb{R})$ such that $\underset{n\to \infty}{Lim}(z_n)= \alpha;$
    \item[(5)] If $\underset{n\to \infty}{Lim}(x_n) \in O,$ then there exists an open set $V \subseteq O$ containing $\underset{n\to \infty}{Lim}(x_n)$ 
     such that $\underset{n\to \infty}{Lim}(y_n) \in V$ if and only if $i(\Lambda (y_n)) \subseteq V .$ 
\end{itemize}

Finally, if $\mathcal Y$ is a topological space, $j: \overline{\mathbb{R}} \to \mathcal Y$ is an embedding and $\mathcal{L}: Seq(\mathbb{R}) \to \mathcal Y $ is a
function satisfying axioms (1)-(5), then there exists a unique continuous function 
$T: \mathcal X \to \mathcal Y$ such that $T \circ \hspace{0.5mm} i = j$ and $T \circ \hspace{0.5mm} \underset{n\to \infty}{Lim} = \mathcal{L}$. The
universality of the Limiter is represented by the following commutative diagram:

\begin{center}
\begin{tikzpicture}[thick, ->, scale=1.15, every node/.style={transform shape}]

  \node (X) at (0,2) {$X$};
  \node (R) at (-3,0) {$\overline{\mathbb{R}}$};
  \node (Y) at (0,0) {$Y$};
  \node (SeqR) at (3,0) {$Seq(\mathbb{R})$};

  \draw[dashed] (X) -- node[pos=0.6, right, xshift=2pt] {$T$} (Y);
  \draw (R) -- node[pos=0.6, left, xshift=-8pt] {$i$} (X);
  \draw (R) -- node[pos=0.6, below, yshift=-2pt] {$j$} (Y);
  \draw (SeqR) -- node[pos=0.6, right, xshift=12pt] {$\underset{n\to \infty}{Lim}$} (X);
  \draw (SeqR) -- node[pos=0.6, below, yshift=-2pt] {$\mathcal{L}$} (Y);

\end{tikzpicture}
\end{center}

\end{definition}

Although we did not explicitly discuss axioms 1 and 2 in our Limiter motivation, we must ensure that our assignment of limits in the extension agrees with the limits already defined in the real numbers; this is the content of axiom 2. Moreover, we must ensure that the points selected as limits are actually accumulation points of the sequence in $\mathcal X,$
 as required by axiom 1. In category theory, one can define mathematical objects in terms of the Limiter, and such objects do indeed exist; this will be the subject of the next section.

\section{Construction of the Limiter}

To construct the Limiter of the real numbers, we introduce an operation on open subsets 
of the generalized real numbers: $[-\infty,\infty].$

\begin{definition} 
Given an open subset $O$ of  $[-\infty,\infty]$. Then
$$O^* = \{\hspace{1mm} [\hspace{0.5mm} (x_n) \hspace{0.5mm}] \in Seq(\mathbb{R})/ \hspace{-1mm} \sim \hspace{1mm} \hspace{1mm} | \hspace{1mm} \Lambda(x_n) \subseteq O\}.$$ 

\end{definition}
   
\noindent With this concept established,  we introduce some properties of the map $O \mapsto O^*$. \\

\noindent First, we note that every sequence of real numbers $(x_n) \in Seq(\mathbb{R})$ has at  least one cluster  point in $\overline{\mathbb{R}}$, that is, $\Lambda(x_n)\neq \emptyset. $ This condition implies that no
equivalence class $[\hspace{0.5mm}(x_n)\hspace{0.5mm}]$ is contained in $\emptyset^*$; otherwise,  $ \Lambda(x_n) \subseteq \emptyset$, a contradiction. Furthermore, the sets of cluster points by definition are subsets of $\overline{\mathbb{R}}$. As such, every equivalence class must be a member of $(\overline{\mathbb{R}})^*$. These observations yield the following.\\

$\hspace{-4mm} \mathbf{Property \hspace{1mm} 1}$: $ (\overline{\mathbb{R}})^*= [-\infty,\infty]^* = Seq(\mathbb{R})/ \hspace{-1mm} \sim \hspace{1mm}$ and $\emptyset^* = \emptyset.$\\


The transitive property of the inclusion relation asserts that $\Lambda(x_n) \subseteq V$ whenever $\Lambda(x_n) \subseteq U$ and $U \subseteq V$. In this expression, $(x_n)$ is a sequence of real numbers
while $U,V$ are open subsets of  $[-\infty,\infty]$. Consequently, $U^* \subseteq V^*.$\\

$\hspace{-4mm} \mathbf{Property \hspace{1mm} 2}$: For any pair of open sets $U,V$ in $[-\infty,\infty]$ with $U \subseteq V$, we have that  $U^* \subseteq V^*.$\\

Likewise, with property 2, we can use set theoretical concepts to gain further insight into the operation $*$. For this observation, we can consider a finite collection of 
open subsets of $\overline{\mathbb{R}}$ denoted by $\mathcal{O} = \{O_i\}_{i=1}^{n}$.
Let $I$ represent the intersection of all open sets in the collection $\mathcal{O}$.

Given a sequence $(x_n) \in Seq(\mathbb{R})$, notice $\Lambda(x_n) \subseteq I$ if and only if $\Lambda(x_n) \subseteq O_i$ for all 
$1 \leq i \leq n$. Once again, our definition of the operation $*$ tells us that $\displaystyle [\hspace{0.5mm} (x_n) \hspace{0.5mm} ] \in I^*$ if 
and only if $[\hspace{0.5mm} (x_n) \hspace{0.5mm} ]  \in O_i^*$ for all $1 \leq i \leq n$. Therefore, the map $O \mapsto O^*$ preserves
finite intersections. We refer to this as property 3. \\

$\hspace{-4mm} \mathbf{Property \hspace{1mm} 3}$: Let $\displaystyle \{O_i\}_{i=1}^k$ be a collection of open sets in $\overline{\mathbb{R}}$, then $\displaystyle \left(\hspace{0.5mm} \bigcap_{i=1}^{k}O_i \hspace{0.5mm}\right)^* = \bigcap_{i=1}^{k} O_i^*.$ \\

Properties 1 and 3 implies that the collection $\tau^* = \{O^* \hspace{1mm} | \hspace{1mm} O \in \tau \hspace{1mm}\}$ is a \emph{basis} for a
topology in $Seq(\mathbb{R})/\hspace{-1mm}\sim$. We assume going forward that $Seq(\mathbb{R})/\hspace{-1mm}\sim$ is equipped with  this topology. The preceding observations can be summarized in the following theorem.

\begin{theorem}

The collection $\tau^* = \{ \hspace{1mm} O^* \hspace{1mm} | \hspace{1mm} O \hspace{1mm} \text{open in} \hspace{1mm} [\infty,\infty] \hspace{1mm} \}$ is the basis for a topology on $Seq(\mathbb{R})/\hspace{-1mm}\sim$.

\end{theorem}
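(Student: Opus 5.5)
I will verify the two standard criteria for a collection $\mathcal{B}$ of subsets of a set $S$ to be a basis for a topology on $S$: first, that $\mathcal{B}$ covers $S$; second, that for any $B_1, B_2 \in \mathcal{B}$ and any point $p \in B_1 \cap B_2$ there is some $B_3 \in \mathcal{B}$ with $p \in B_3 \subseteq B_1 \cap B_2$. Here $S = Seq(\mathbb{R})/\hspace{-1mm}\sim$ and $\mathcal{B} = \tau^*$, where $\tau$ is the order topology on $[-\infty,\infty]$. Before starting, I will note that $O^*$ is a well-defined subset of $S$: membership of $[\hspace{0.5mm}(x_n)\hspace{0.5mm}]$ in $O^*$ depends only on $\Lambda(x_n)$, and this set is constant on equivalence classes by the definition of $\sim$.

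For the covering condition, I will invoke Property 1. Since $\overline{\mathbb{R}}$ is open in itself, $\overline{\mathbb{R}}^* \in \tau^*$. Property 1 gives $\overline{\mathbb{R}}^* = Seq(\mathbb{R})/\hspace{-1mm}\sim$, so the single basis element $\overline{\mathbb{R}}^*$ already covers the whole space. Property 1 also gives $\emptyset^* = \emptyset$, so the empty set lies in the generated topology, although the basis criterion does not need this.

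For the intersection condition, I will take $U^*, V^* \in \tau^*$ with $U, V$ open in $\overline{\mathbb{R}}$, and a class $[\hspace{0.5mm}(x_n)\hspace{0.5mm}] \in U^* \cap V^*$. Since $\tau$ is a topology, $U \cap V$ is open, so $(U\cap V)^* \in \tau^*$. Property 3 with $k=2$ gives $(U \cap V)^* = U^* \cap V^*$. I can therefore choose $B_3 = (U\cap V)^*$, which contains the given class and equals the intersection. In fact this shows that $\tau^*$ is closed under finite intersections, which is stronger than the basis criterion requires.

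There is no real obstacle here, since the properties already established do all the work. The only points needing care are the well-definedness of $O^*$ on equivalence classes and reading the statement's ``$[\infty,\infty]$'' as the intended $[-\infty,\infty]$. Once both conditions are checked, the standard theorem on bases shows that the collection of all unions of members of $\tau^*$ is a topology on $Seq(\mathbb{R})/\hspace{-1mm}\sim$ with basis $\tau^*$.
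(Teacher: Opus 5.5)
Your proof is correct and follows essentially the same route as the paper, which derives the basis property directly from Property 1 (so $\overline{\mathbb{R}}^*$ covers the whole quotient) and Property 3 (so $\tau^*$ is closed under finite intersections). Your remark that $O^*$ is well defined on equivalence classes is a small point the paper leaves implicit.
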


Now that we have constructed a topological space, we now define an embedding of the extended reals into our space. We do so by defining the inclusion map.

\begin{definition} The inclusion map $i:\overline{\mathbb{R}} \to Seq(\mathbb{R})/\hspace{-1mm}\sim$ is the function sending each real 
number $x \in \mathbb{R}$ to the equivalence class of its constant sequence representation $x_n = x$ for any $n\in \mathbb N.$
Additionally, we map $\infty$ to the equivalence class of the sequence $(n)$ and $-\infty$ to the 
class of $(-n)$. In other words, it is the function $i: \overline{\mathbb{R}} \to Seq(\mathbb{R})/ \hspace{-1mm} \sim \hspace{1mm} $ given by
   
\end{definition}

$$i(x)=
\begin{cases}

\vspace{5mm}

\hspace{3mm} [\hspace{1mm}(x)\hspace{1mm}] \hspace{6mm} \text{if} \hspace{2mm} x \in \mathbb{R} \\

\vspace{5mm}

\hspace{3mm} [\hspace{1mm}(n) \hspace{1mm}] \hspace{8mm} \text{if} \hspace{2mm} x = \infty \\

\vspace{3mm}

\hspace{3mm} [\hspace{1mm}(-n) \hspace{1mm}] \hspace{5mm} \text{if} \hspace{2mm} x = -\infty \\
\end{cases}$$

Notice that $\Lambda(x) = \{x\}$ when $x \in \mathbb{R}$ while $\Lambda(n) = \{\infty\}, \Lambda(-n) = \{-\infty\}$. Therefore, by definition of $i,$ we map each $x \in \overline{\mathbb{R}}$ to the equivalence class of a sequence with $x$ as its unique cluster point.
Therefore, for any open set $O$ of $\overline{\mathbb{R}}$, we have that $$i(x) \in O^* \iff \{x\} \subseteq O \iff x \in O.$$ This is property 4 of our 
operation $*$. 
\par\medskip

$\hspace{-4mm} \mathbf{Property \hspace{1mm} 4}$: For any $x \in \overline{\mathbb{R}}$ and an open subset $O \subseteq \overline{\mathbb{R}}$, $i(x) \in O^*$ 
if and only if $x \in O.$ \\

Using property 4, we will prove that the inclusion map is an embedding. This leads to the next two propositions.

\begin{proposition} The inclusion map $i: \overline{\mathbb{R}} \to Seq(\mathbb{R})/ \hspace{-1mm} \sim \hspace{1mm},$ given by $i(x) = [\hspace{0.5mm} (x) \hspace{0.5mm}] $ is an injection.  
\end{proposition}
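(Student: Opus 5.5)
To show that $i$ is injective, I will take two distinct points $x \neq y$ of $\overline{\mathbb{R}}$ and prove $i(x) \neq i(y)$. I will use the Hausdorff property of $\overline{\mathbb{R}}$ together with Property 4 of the operation $*$.

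The key steps, in order:

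First, since $\overline{\mathbb{R}}$ with the order topology is Hausdorff, pick an open set $O \subseteq \overline{\mathbb{R}}$ with $x \in O$ and $y \notin O$. Concretely, one could take disjoint open neighbourhoods and keep the one containing $x$.

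Next, apply Property 4 to each point. It gives $i(x) \in O^*$ because $x \in O$, and $i(y) \notin O^*$ because $y \notin O$. So $O^*$ contains one of the two equivalence classes but not the other, and hence $i(x) \neq i(y)$.

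As a cross-check, I could argue directly from the definition of $\sim$. By construction of $i$ we have $\Lambda$ of a representative of $i(x)$ equal to $\{x\}$; this covers the constant sequence and the sequences $(n)$, $(-n)$ for $\pm\infty$. Likewise the representative of $i(y)$ has cluster set $\{y\}$. Since $\{x\} \neq \{y\}$, the representatives are not similar, so the classes differ.

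The only point needing care is the claim that each representative has exactly the stated cluster set. For $\pm\infty$ this requires checking that every subsequence of $(n)$ tends to $\infty$ in the order topology, so no real or $-\infty$ cluster point appears. This is routine, and the paper already records it just before Property 4, so I expect no real obstacle.
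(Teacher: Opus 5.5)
Your proof is correct, but your main argument differs slightly from the paper's. You separate $x$ and $y$ by an open set $O$ of $\overline{\mathbb{R}}$, then use Property 4 to see that the basic set $O^*$ contains $i(x)$ but not $i(y)$. The paper instead argues directly from the definition of $\sim$: every sequence in the class $i(x)=i(y)$ would have both $\{x\}$ and $\{y\}$ as its cluster set, so $x=y$. That is exactly your ``cross-check''.

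The paper's route is the more economical one. Property 4 is itself derived from the same computation $\Lambda(x)=\{x\}$, $\Lambda(n)=\{\infty\}$, $\Lambda(-n)=\{-\infty\}$, so your topological argument is a detour through that fact rather than a more elementary proof. It also uses only that $x$ and $y$ can be distinguished by an open set, not the full Hausdorff property.

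What your route buys is a stronger conclusion if you keep both disjoint neighbourhoods $U \ni x$ and $V \ni y$. By Properties 1, 3 and 4, $i(x)\in U^*$, $i(y)\in V^*$, and $U^*\cap V^*=(U\cap V)^*=\emptyset^*=\emptyset$. So distinct points of $i(\overline{\mathbb{R}})$ have disjoint open neighbourhoods in the whole space $Seq(\mathbb{R})/\!\sim$, which fits naturally with the embedding result proved next.
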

\begin{proof} 
 By construction, $i(x)$ is the set of all real sequences with $x$ in $[-\infty ,\infty]$ as its only cluster point. Assuming that $i(x) = i(y),$ we conclude that $x$ and $y$ are both the unique cluster points of any sequence $(a_n) \in i(x) = i(y)$. It follows that $x = y$.


\end{proof}

\begin{proposition} The inclusion map $i: \overline{\mathbb{R}} \to Seq(\mathbb{R})/ \hspace{-1mm} \sim \hspace{1mm},$ given by $i(x) = [\hspace{0.5mm} (x) \hspace{0.5mm}] $ is an embedding. 

\end{proposition}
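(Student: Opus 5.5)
Since Proposition 1 already gives injectivity, it remains to show two things: that $i$ is continuous, and that $i$ is a homeomorphism onto its image $i(\overline{\mathbb{R}})$ with the subspace topology. Both will come out of Property 4, which says that for every open $O\subseteq\overline{\mathbb{R}}$ and every $x\in\overline{\mathbb{R}}$ we have $i(x)\in O^*$ if and only if $x\in O$. In set-theoretic form this reads
\[
i^{-1}(O^*) = O \quad\text{and}\quad O^*\cap i(\overline{\mathbb{R}}) = i(O)\qquad\text{for every open } O\subseteq\overline{\mathbb{R}}.
\]
The second identity uses injectivity: a point $i(x)$ of the image lies in $O^*$ exactly when $x\in O$.

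For continuity, the plan is to check preimages of basic open sets only, since preimages commute with unions. By Theorem 2, every open set of $Seq(\mathbb{R})/\hspace{-1mm}\sim$ is a union of sets $O^*$. By the first identity, $i^{-1}(O^*)=O$, which is open in $\overline{\mathbb{R}}$. Hence $i$ is continuous.

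For openness onto the image, take an open set $O\subseteq\overline{\mathbb{R}}$. By the second identity, $i(O)=O^*\cap i(\overline{\mathbb{R}})$. This is the intersection of a basic open set with the image, so it is open in the subspace topology. Therefore the inverse map $i(\overline{\mathbb{R}})\to\overline{\mathbb{R}}$ is continuous. Together with continuity and injectivity, this makes $i$ a homeomorphism onto its image, i.e.\ an embedding.

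I do not expect a real obstacle here. The only point that needs care is making the set-theoretic translation of Property 4 explicit, in particular that $O^*\cap i(\overline{\mathbb{R}})$ is exactly $i(O)$ and not something larger. This is where injectivity and the fact that each $i(x)$ has the singleton cluster set $\{x\}$ are used.
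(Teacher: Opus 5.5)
Your proposal is correct and essentially matches the paper's proof: both use Property 4 to get $i^{-1}(O^*)=O$, which gives continuity, and $i(O)=O^*\cap i(\overline{\mathbb{R}})$, which gives openness onto the image, with injectivity taken from Proposition 1. One small point: the second identity follows from Property 4 alone, since $i(x)\in O^*$ already forces $x\in O$, so injectivity is needed only for the inverse map on the image, not for that identity.
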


\begin{proof}
  
Consider a basic set $O^*$ where $O$ is open in $\overline{\mathbb{R}}$. Then, the following sets are equal by Property 4:
$$i^{-1}(O^*) = \{ x \in \overline{\mathbb{R}} \hspace{1mm} | \hspace{1mm} i(x) \in O^*\} = \{ x\in \overline{\mathbb{R}} \hspace{1mm} | \hspace{1mm} \{x\} \subseteq O\} = \{x\in \overline{\mathbb{R}} \hspace{1mm} | \hspace{1mm}  {x} \in O\} = O.$$

\noindent This implies  that $i$ is continuous. 

Since $i(x) \in O^*$ if and only if  $x \in O$, we have the following:

\begin{centering}
  
$i(O) = \{i(x) \hspace{1mm} | \hspace{1mm} x \in O \} = \{i(x) \hspace{1mm} | \hspace{1mm} x \in \overline{\mathbb{R}} \hspace{1mm} \text{and} \hspace{1mm} x \in O \}$
\vspace{3mm}

\hspace{40mm} $= \{i(x) \hspace{1mm} | \hspace{1mm} x \in \overline{\mathbb{R}} \hspace{1mm} \text{and} \hspace{1mm} i(x) \in O^* \}$

\vspace{3mm}

\hspace{5mm} $= O^* \cap i(\overline{\mathbb{R}}).$ 

\end{centering}

Therefore, the image of every open set is also open in the subspace $i(\overline{\mathbb{R}})$. Thus,
restricting the codomain of $i$ to its image results in a homemorphism. This proves the result.  
\end{proof}

In our next proposition, we show that every sequence of real numbers $(x_n) \in Seq(\mathbb{R})$ is convergent 
in $Seq(\mathbb{R})/\hspace{-1mm}\sim$.

Let  $\varphi_{\sim}$ be the quotient map of the equivalence relation $\sim,$ that is, 
$$\varphi_{\sim}: Seq(\mathbb{R}) \to Seq(\mathbb{R})/\hspace{-1mm}\sim,\quad \varphi(x_n) = [\hspace{0.5mm}(x_n)\hspace{0.5mm}].$$

Furthermore, we demonstrate that the triple $(Seq(\mathbb{R})/\hspace{-1mm}\sim,\hspace{0.5mm} i,\hspace{0.5mm} \varphi_{\sim})$ is the \emph{Limiter}.
\par\medskip

\begin{theorem} For any sequence $(x_n) \in Seq(\mathbb{R})$, the sequence $(i(x_n))$  converges to $[\hspace{0.5mm}(x_n)\hspace{0.5mm}]$ in $Seq(\mathbb{R})/\hspace{-1mm}\sim.$
Furthermore, the triple $(Seq(\mathbb{R})/\hspace{-1mm}\sim,\,i,\,\varphi_{\sim})$ is the 
Limiter.  
  
\end{theorem}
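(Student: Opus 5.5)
The plan is to verify the convergence claim directly from the basis $\tau^*$ of Theorem 2, then check axioms (1)--(5) for the triple $(Seq(\mathbb{R})/\hspace{-1mm}\sim,\, i,\, \varphi_{\sim})$, and finally establish the universal property. That $i$ is an embedding is already given by Proposition 2.

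\emph{Convergence (axiom (1)).} Let $[\hspace{0.5mm}(x_n)\hspace{0.5mm}]$ lie in a basic open set $O^*$, so that $\Lambda(x_n)\subseteq O$ with $O$ open in $\overline{\mathbb{R}}$. I claim that $x_n\in O$ for all but finitely many $n$. If not, infinitely many terms lie in the closed set $\overline{\mathbb{R}}\setminus O$. This set is compact, since $\overline{\mathbb{R}}$ is compact, so the corresponding subsequence has a cluster point in $\overline{\mathbb{R}}\setminus O$. That point is also a cluster point of $(x_n)$, contradicting $\Lambda(x_n)\subseteq O$. Hence eventually $x_n\in O$, and by Property 4 this gives $i(x_n)\in O^*$. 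Since every neighbourhood of $[\hspace{0.5mm}(x_n)\hspace{0.5mm}]$ contains a basic one, $i(x_n)\to[\hspace{0.5mm}(x_n)\hspace{0.5mm}]$.

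\emph{Axioms (2)--(5).}
\begin{itemize}
\item[(2)] If $x_n\to L$ in $\overline{\mathbb{R}}$, then $\Lambda(x_n)=\{L\}=\Lambda$ of the representative of $i(L)$, so $\varphi_\sim(x_n)=i(L)$.
\item[(3)] This is the definition of the quotient by $\sim$.
\item[(4)] This is the surjectivity of $\varphi_\sim$.
\item[(5)] Given $[\hspace{0.5mm}(x_n)\hspace{0.5mm}]\in O$ with $O$ open, choose a basic set with $[\hspace{0.5mm}(x_n)\hspace{0.5mm}]\in U^*\subseteq O$, and set $V=U^*$. By Property 4, $i(\Lambda(y_n))\subseteq U^*$ if and only if $\Lambda(y_n)\subseteq U$, which holds if and only if $[\hspace{0.5mm}(y_n)\hspace{0.5mm}]\in U^*$. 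This is exactly the required equivalence.
\end{itemize}

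\emph{Universality.} Let $(\mathcal Y, j,\mathcal L)$ satisfy axioms (1)--(5), and define $T([\hspace{0.5mm}(x_n)\hspace{0.5mm}])=\mathcal L(x_n)$.
\begin{itemize}
\item \emph{Well-defined.} This follows from axiom (3) for $\mathcal L$.
\item \emph{Factorisations.} Clearly $T\circ\varphi_\sim=\mathcal L$. For $T\circ i=j$: $i(x)$ is represented by a sequence converging to $x$ in $\overline{\mathbb{R}}$, so axiom (2) for $\mathcal L$ gives $\mathcal L=j(x)$ on it.
\item \emph{Uniqueness.} Since $\varphi_\sim$ is surjective, $T\circ\varphi_\sim=\mathcal L$ determines $T$ completely.
\end{itemize}

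\emph{Continuity of $T$.} I expect this to be the main obstacle, and the plan is to use axiom (5) in $\mathcal Y$. Let $W\subseteq\mathcal Y$ be open and $T([\hspace{0.5mm}(x_n)\hspace{0.5mm}])=\mathcal L(x_n)\in W$. Axiom (5) provides an open $V\subseteq W$ containing $\mathcal L(x_n)$ with $\mathcal L(y_n)\in V$ if and only if $j(\Lambda(y_n))\subseteq V$. Put $O=j^{-1}(V)$, which is open in $\overline{\mathbb{R}}$ because $j$ is continuous. Then
\[
O^*=\{[\hspace{0.5mm}(y_n)\hspace{0.5mm}] : \Lambda(y_n)\subseteq O\}=\{[\hspace{0.5mm}(y_n)\hspace{0.5mm}] : j(\Lambda(y_n))\subseteq V\}=\{[\hspace{0.5mm}(y_n)\hspace{0.5mm}] : \mathcal L(y_n)\in V\}.
\]
So $O^*$ is a basic open set containing $[\hspace{0.5mm}(x_n)\hspace{0.5mm}]$ with $T(O^*)\subseteq V\subseteq W$. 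Hence $T^{-1}(W)$ is open, $T$ is continuous, and the triple is the Limiter.
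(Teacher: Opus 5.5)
Your proof is correct and follows the paper's proof essentially step by step. It uses the same compactness/cluster-point argument for axiom (1), the same choice of a basic set $U^*$ together with Property 4 for axiom (5), and the same map $T([(x_n)])=\mathcal L(x_n)$, whose continuity rests on axiom (5) in $\mathcal Y$ and the identity $T^{-1}(V)=(j^{-1}(V))^*$. The differences are only cosmetic: you argue convergence directly through compactness of $\overline{\mathbb{R}}\setminus O$ rather than by contradiction via continuity of $i$, and you check openness of $T^{-1}(W)$ pointwise in the domain, which avoids the paper's detour through images and the bijectivity of $T$.
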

\begin{proof}
 First, we show that the sequence $(i(x_n))$ converges to 
 $[\hspace{0.5mm}(x_n)\hspace{0.5mm}]$ for any sequence $(x_n) \in Seq(\mathbb{R}).$
For the sake of contradiction, let us assume that we can find a sequence $(x_n) \in Seq(\mathbb{R})$ so that $(i(x_n))$ does not converge to $[\hspace{0.5mm}(x_n)\hspace{0.5mm}]$. Then there exists a basic set $O^*$ such that $[\hspace{0.5mm}(x_n)\hspace{0.5mm}] \in O^*$
and a subsequence $(x_{n_k})$ with the property that $i(x_{n_k}) \not \in O^*$ for all $n_k \in \mathbb{N}$. 
Since a cluster set of any sequence in $\overline{\mathbb{R}}$ is non-empty, we can find a subsequence of $(x_{n_k})$ which is convergent in $\overline{\mathbb{R}}.$
Denote this subsequence by $(x_{n_{k_w}})$ and its limit by $L \in \overline{\mathbb{R}}$.
Due to being a subsequence of $(x_n)$, $L \in \Lambda(x_n) \subseteq O$ by definition of $O^*$.

Since the inclusion map $i: \overline{\mathbb{R}} \to Seq(\mathbb{R})/ \hspace{-1mm} \sim \hspace{1mm}$ is continuous and, in particular, sequentially continuous,  we have that $i(x_{n_{k_w}})$ converges to 
$i(L)$. Consequently, $i(L)$ being a member of an open set $O^*$ allow us to find a natural number $N \in \mathbb{N}$ with the property that $i(x_{n_{k_w}}) \in O^*$ whenever $n_{k_w} > N$. 
This is a contradiction. Therefore, $i(x_n)$ converges to $[ \hspace{0.5mm}(x_n) \hspace{0.5mm} ] = \varphi_{\sim}(x_n)$. This establishes the
first axiom of the Limiter. \\ 

We next verify that $(Seq(\mathbb{R})/\hspace{-1mm}\sim,\,i, \varphi_{\sim})$ satisfies the remaining axioms, that is,
\par\medskip
\hspace{5mm} (2) If $(x_n)$ is convergent in $\overline{\mathbb{R}}$, then $ \displaystyle i( \hspace{1mm} \lim_{n\to\infty} x_n \hspace{1mm}) = \varphi_{\sim}(\displaystyle \lim_{n \to \infty}(x_n)) = [\hspace{0.5mm}(\displaystyle \lim_{n \to \infty}(x_n))\hspace{0.5mm}];$

\vspace{5mm}

\hspace{5mm} (3)  $[\hspace{0.5mm}(x_n)\hspace{0.5mm}] = \varphi_{\sim}(x_n) = \varphi_{\sim}(y_n) = [\hspace{0.5mm}(y_n)\hspace{0.5mm}]$ if and only if $(x_n) \sim (y_n);$

\vspace{5mm}

\hspace{5mm} (4) For all $\alpha \in Seq(\mathbb{R})/\hspace{-1mm}\sim$, there exists $(z_n) \in Seq(\mathbb{R})$ such that $[\hspace{0.5mm} (z_n) \hspace{0.5mm}] = \varphi_{\sim}(z_n) = \alpha$;

\vspace{5mm}

\hspace{5mm} (5) For any open set $\widetilde{O}$ with $\phi_{\sim}(x_n) \in \widetilde{O}$ there exists a sub-neighborhood $W \subseteq \widetilde{O}$ of 

\vspace{3mm}

\hspace{11mm} $\varphi_{\sim}(x_n)$ such that $[(y_n)] \in W$ if and only if $\Lambda(x_n) \subseteq i^{-1}(W).$

\vspace{7mm}

In the above axioms, we denote $\underset{n\to \infty}{Lim}$ by $\varphi_{\sim}$ for convenience. We note that conditions (2)-(4) follow from the construction of $Seq(\mathbb{R})/\sim$. It remains to prove the fifth. 

Let $\widetilde{O}$ be an open set containing 
$ \varphi_{\sim}(x_n) = [\hspace{0.5mm}(x_n)\hspace{0.5mm}] \in \widetilde{O}$. Then $\widetilde{O}$ is the union of basic sets by definition of our topology. As such, one of the basic sets $W$ in the union must contain our specified equivalence class $[\hspace{0.5mm}(x_n)\hspace{0.5mm}]$. For $W$ there exists $V \subseteq [-\infty,\infty]$ such that  $W=V^{*}$. We 
have therefore found an open set $V \subseteq [-\infty,\infty]$ such that $[(x_n)] \in V^* \subseteq \widetilde{O} $. 
We recall $V^* = \{[(x_n)] \hspace{1mm} | \hspace{1mm} \Lambda(x_n) \subseteq V \}$. By our proof of $i$ being 
an embedding, $i^{-1}(V^*) = V$. Consequently, $\varphi_{\sim}(y_n) = [(y_n)] \in V^* $ if and only if
$\Lambda(y_n) \subseteq V = i^{-1}(V^*)$. This proves the final axiom. Therefore, $(Seq(\mathbb{R})/\hspace{-1mm}\sim,\,i, \varphi_{\sim})$ satisfies the axioms of being a Limiter. \\

It remains to show that it is universal. 
To prove universality, we consider an arbitrary triple $(\mathcal {Y},\,j,\, {L})$ where $j:\overline{\mathbb{R}} \to \mathcal Y$ and ${L}: Seq(\mathbb{R}) \to \mathcal Y$ satisfying axioms of the Limiter. Let $\mathcal X$ denote $Seq(\mathbb{R})/\hspace{-1mm}\sim$. Then we define $T: \mathcal X \to \mathcal Y$ by
$$T([\hspace{0.5mm}(x_n)\hspace{0.5mm}]) = {L}(x_n).$$
By axiom 3,  $T$ is not only a well defined function but also injective. Furthermore, axiom 4 implies that $T$ is surjective. It is therefore a bijection.

We will show that $T$ is the unique continuous function satisfying the following:

$$T \circ i = j \hspace{2mm} \text{and} \hspace{2mm} T \circ \varphi_{\sim} = {L}$$ 

We begin by proving continuity of $T$.
Let $O$ be an open set in $\mathcal Y$. Then, for every point $\alpha \in O$, there exists an open subset $V_{\alpha} \subseteq O$ such that $\alpha \in V_{\alpha}$ and  for all sequences 
$(y_n) \in Seq(\mathbb{R}):$  ${L}(y_n) \in V$ if and only if  $j(\Lambda(y_n)) \subseteq V_{\alpha}$. This is a consequence of axioms 4 and 5. 
Under the same constraints, continuity of the embedding $j: \overline{\mathbb{R}} \to \mathcal{Y}$, asserts that $j^{-1}(V_{\alpha})$ is an open subset of $\overline{\mathbb{R}}$. Due to
this property, we have that $(j^{-1}(V_{\alpha}))^*$ is basic set in $\mathcal X= Seq(\mathbb{R})/\hspace{-1mm}\sim$ for any $\alpha \in O$. Taking the image of these basic sets under $T$ gives us the following: 
\[
(j^{-1}(V))^* = \{ \hspace{1mm} [(x_n)]\in \mathcal{X} \hspace{1mm} | \hspace{1mm} [(x_n)] \in  (j^{-1}(V)))^* \hspace{1mm} \} = 
\{[(x_n)]\in \mathcal{X} \hspace{1mm} | \hspace{1mm} \Lambda(x_n) \subseteq j^{-1}(V)\},
\]
\[
\begin{aligned}
T\bigl((j^{-1}(V_{\alpha}))^*\bigr) 
  &= \{\, T([x_n]) \;\big|\; \Lambda(x_n) \subseteq j^{-1}(V_{\alpha}) \,\} \\[4pt]
  &= \{\, {L}(x_n) \;\big|\; j(\Lambda(x_n)) \subseteq V_{\alpha} \,\} \\[4pt]
  &= \{\, {L}(x_n) \;\big|\; {L}(x_n) \in V_{\alpha }\,\}= V_{\alpha}
\end{aligned}
\]

This shows  that $T\bigl((j^{-1}(V_{\alpha}))^*\bigr) = V_{\alpha}$ for any $\alpha \in O$. Due to $T$ being a bijection, the above is equivalent to $T^{-1}(V_{\alpha}) = (j^{-1}(V_{\alpha}))^{*}$ for any $\alpha \in O$. It follows that

$$\displaystyle T^{-1}(O) = T^{-1} (\bigcup_{\alpha \in O} V_{\alpha}) = \bigcup_{\alpha \in O} T^{-1}(V_{\alpha}) = \bigcup_{\alpha \in O} (j^{-1}(V_{\alpha}))^*$$

This proves that $T^{-1}(O)$ can be represented as the union of basic sets, proving continuity of $T$.

We now prove that $T \circ i = j$ and $T \circ \phi_{\sim} = {L}.$
Consider $x \in \overline{\mathbb{R}}$. Then, by construction of $i$ and 
Theorem 1, we have that $\underset{n\to \infty}{Lim}(x_n) = x$ for all sequence in the equivalence class $i(x)$.
Consequently, the following are equal by axiom 2:

$$T(i(x))= T([(x)])={L}((x))=j(\hspace{0.5mm} \underset{n\to \infty}{lim}(x) \hspace{0.5mm}) = j(x)$$

Due to the above argument holding for all $x \in \overline{\mathbb{R}}$, we conclude that $T \circ i  = j$. Similarly, 
we can conclude that $T \circ \varphi_{\sim} = {L}$

$$T( \hspace{0.5mm}\phi_{\sim}(x_n)\hspace{0.5mm}) = T(\hspace{0.5mm}[\hspace{0.5mm}(x_n)\hspace{0.5mm}]\hspace{0.5mm}) = {L}(x_n)$$

\noindent Therefore,  $T \circ i = j$ and $T \circ \varphi_{\sim} = {L}$. 

To conclude the proof of universality, we show 
that $T$ is unique.

Let $T': \mathcal{X}  \to \mathcal{Y}$ be a continuous function satisfying our conditions. Then, for
any equivalence class $[(x_n)] \in \mathcal X$, the following holds
$$T'(\hspace{0.5mm}[\hspace{0.5mm}(x_n)\hspace{0.5mm}]\hspace{0.5mm}) = T'(\hspace{0.5mm} \varphi_{\sim}(x_n) \hspace{0.5mm} ) = (T'\circ \varphi_{\sim})(x_n) =  {L}(x_n) = T(\hspace{0.5mm}[\hspace{0.5mm}(x_n)\hspace{0.5mm}]\hspace{0.5mm}).$$
This proves 
uniqueness of the function $T$ and in turn universality. 

We have successfully constructed the Limiter of the real numbers. 
\end{proof}

Building on this foundation, our next objective is to study the intrinsic topological properties of the Limiter—examining continuity, determining which countability axioms are satisfied, and identifying which separation axioms hold. Although the Limiter ensures that every sequence has a canonical limit, we have not yet defined canonical limits for general nets in this space. This warrants further investigation. Since we have defined an extension of the real numbers, we should also define continuous extensions of the addition and multiplication operations and examine their algebraic properties. Once a theory of addition and multiplication has been established, it may be worthwhile to investigate twin primes by examining the limit of the associated oscillating sequence: $p, p+2, p, p+2, \ldots$ where $p$ is a prime number.

\end{document}